\definecolor{webgreen}{rgb}{0,.5,0}
\definecolor{webbrown}{rgb}{.6,0,0}
\newcommand{\seqnum}[1]{\href{https://oeis.org/#1}{\rm \underline{#1}}}
\def\Enn{\mathbb{N}}
\begin{document}

\theoremstyle{plain}
\newtheorem{theorem}{Theorem}
\newtheorem{corollary}[theorem]{Corollary}
\newtheorem{lemma}[theorem]{Lemma}
\newtheorem{proposition}[theorem]{Proposition}

\theoremstyle{definition}
\newtheorem{definition}[theorem]{Definition}
\newtheorem{example}[theorem]{Example}
\newtheorem{conjecture}[theorem]{Conjecture}

\theoremstyle{remark}
\newtheorem{remark}[theorem]{Remark}

\title{The Hurt-Sada Array and Zeckendorf Representations}

\author{Jeffrey Shallit\\
School of Computer Science \\
University of Waterloo\\
Waterloo, ON  N2L 3G1 \\
Canada \\
\href{mailto:shallit@uwaterloo.ca}{\tt shallit@uwaterloo.ca}}

\maketitle

\begin{abstract}
Wesley Ivan Hurt and Ali Sada both independently proposed
studying an infinite array where the $0$'th row
consists of the non-negative integers $0,1,2,\ldots$ in increasing order.
Thereafter the $n$'th row is formed from the $(n-1)$'th row by ``jumping''
the single entry $n$ by $n$ places to the right.  Sada also defined a
sequence $s(n)$ defined to be the first number that $n$ jumps over.
In this note I show how the Hurt-Sada
array and Sada's sequence are intimately connected
with the golden ratio $\varphi$ and Zeckendorf representation.
I also consider a number of related sequences.
\end{abstract}

\section{Introduction}

On December 9 2023, Wesley Ivan Hurt 
introduced a certain simple transformation on the 
sequence of natural numbers that generates an infinite
array having interesting properties.
Recently,
on the {\tt seqfan} mailing list,
Ali Sada \cite{Sada:2025} independently proposed studying the
same transformation, and also asked about a certain related sequence $s(n)$.
It turns out that the array and Sada's sequence are related to
$\varphi = (1+\sqrt{5})/2$, the golden ratio.  In this note
we prove some of the observed properties with the aid of
the {\tt Walnut} theorem-prover.

Let us start by arranging the natural numbers as the $0$'th row of an
infinite array $A$; see Table~\ref{tab1}.  
Row $n$ of the array $A$ is created from row $n-1$ as follows:  find the
unique occurrence of $n$ in row $n-1$, and shift this occurrence of $n$
by $n$ positions to the right.
The first few rows are given in Table~\ref{tab1}.
\begin{table}[htb]
\begin{center}
\begin{tabular}{c|ccccccccccccccccccccc}
\diagbox{$n$}{$k$} & 0& 1& 2& 3& 4& 5& 6& 7& 8& 9&10&11&12&13&14&15&16&17&18&19&\\
\hline
0 & 0& 1& 2& 3& 4& 5& 6& 7& 8& 9&10&11&12&13&14&15&16&17&18&19&\\
1 & 0& 2& 1& 3& 4& 5& 6& 7& 8& 9&10&11&12&13&14&15&16&17&18&19&\\
2 & 0& 1& 3& 2& 4& 5& 6& 7& 8& 9&10&11&12&13&14&15&16&17&18&19&\\
3 & 0& 1& 2& 4& 5& 3& 6& 7& 8& 9&10&11&12&13&14&15&16&17&18&19&\\
4 & 0& 1& 2& 5& 3& 6& 7& 4& 8& 9&10&11&12&13&14&15&16&17&18&19&\\
5 & 0& 1& 2& 3& 6& 7& 4& 8& 5& 9&10&11&12&13&14&15&16&17&18&19&\\
6 &  0& 1& 2& 3& 7& 4& 8& 5& 9&10& 6&11&12&13&14&15&16&17&18&19&\\
7 & 0& 1& 2& 3& 4& 8& 5& 9&10& 6&11& 7&12&13&14&15&16&17&18&19&\\
8 & 0& 1& 2& 3& 4& 5& 9&10& 6&11& 7&12&13& 8&14&15&16&17&18&19&\\
\end{tabular}
\end{center}
\caption{First few rows and columns of the Hurt-Sada array.}
\label{tab1}
\end{table}

It is immediate from the definition that each row is a permutation
of $\Enn$.  Therefore we can define $p(n)$ to be the position
of $n$ in row $n-1$; that is, the unique $i$ such that $A[n-1,i] = n$.
See Table~\ref{tab2}.
Sada's sequence $(s(n))_{n\geq 1}$ is defined to be the {\it first\/} of
the $n$ elements that $n$ ``jumps over''; in other words, it
is $A[n-1, p(n)+1]$.  Similarly, one can study the {\it last\/} of the
$n$ elements that $n$ ``jumps over''; in other words, it is
$t(n) := A[n-1, p(n)+n]$.  Another obvious thing to study are the
diagonal sequences $d(n) = A[n,n]$ and $d'(n) = A[n-1, n]$.
The last column in Table~\ref{tab2} gives a reference from the
On-Line Encyclopedia of Integer Sequences (OEIS) \cite{Sloane:2025}.
\begin{table}[H]
\begin{center}
\begin{tabular}{c|cccccccccccccccccccccc}
$n$ & 0& 1& 2& 3& 4& 5& 6& 7& 8& 9&10&11&12&13&14&15&16&17 & OEIS \\
\hline
$p(n)$ &0 & 1& 1& 2& 3& 3& 4& 4& 5& 6& 6& 7& 8& 8& 9& 9&10&11 & \seqnum{A060143}\\
$s(n)$ & 0 & 2& 1& 2& 5& 3& 7& 4& 5&10& 6& 7&13& 8&15& 9&10&18 & \seqnum{A380079}\\
$t(n)$ & 0 & 2& 3& 5& 7& 8&10&11&13&15&16&18&20&21&23&24&26&28 & \seqnum{A022342} \\
$d(n)$ & 0& 2& 3& 4& 3& 7& 8& 9& 6& 7&13&14& 9&10&18&19&12&13 & \seqnum{A379739} \\
$d'(n)$ & 0 & 1& 1& 2& 5& 6& 4& 5&10&11&12& 8&15&16&17&11&20&21& \seqnum{A368050}
\end{tabular}
\end{center}
\caption{Some interesting sequences.}
\label{tab2}
\end{table}

In this note we prove theorems characterizing each of these four sequences.
The proof technique uses finite automata, the Zeckendorf representations
of integers, and the {\tt Walnut} theorem prover,
and is discussed in several previous papers and the book \cite{Shallit:2023}.

\section{Zeckendorf representation}

The Fibonacci numbers are, as usual, given by
$F_0 = 0$, $F_1 = 1$, and $F_i = F_{i-1} + F_{i-2}$ for $i \geq 2$.
We briefly recall the properties of the Zeckendorf representation
(also called Fibonacci representation) of integers
\cite{Lekkerkerker:1952,Zeckendorf:1972}.  In this representation,
a non-negative integer is written as a sum of distinct Fibonacci
numbers $F_i$ for $i \geq 2$, subject to the condition that no two
adjacent Fibonacci numbers are used; the representation is unique.
The Zeckendorf representation of an integer $n$ can be written as a 
binary string $w = a_1 a_2 \cdots a_t$; here we have
$$ n = \sum_{1 \leq i \leq t} a_i F_{t+2-i}.$$
For example, the integer $43$ is represented by the
string $10010001$.  Notice that the most significant digits are
at the left, as in the case of ordinary decimal representation.

\section{Finite automata}

We will make use of finite automata, a simple model of a computer.
A finite automaton $M$ consists of a finite set of states $Q$, labeled transitions
between the states, a distinguished initial state $q_0$, and a set of
final (accepting) states $F$.  Starting in $q_0$, the automaton reads
a finite input string and follows the transitions, moving between states.
The input $w$ is said to be accepted by the automaton if, after reading 
all of $w$'s symbols, it is in a final state.  Otherwise it is rejected.
For more information about this topic, see \cite{Hopcroft&Ullman:1979}.

An automaton is depicted as a diagram, with circles representing states
and labeled arrows indicating the transitions.  The initial state is
depicted by a single arrow with no source.  Final states are indicated
by double circles.

Our automata will read the Zeckendorf representation of integers as inputs.
Hence we can regard the automata as taking integer inputs, instead of
strings.  Sometimes we will need automata to take multiple integer inputs
at once.  In this case, the Zeckendorf representations of the integers
are read {\it in parallel}, with the shorter representations (if there are
any) padded with zeros at the front to make them agree in length.

\section{The {\tt Walnut} theorem prover}

We will also make use of the {\tt Walnut} theorem prover, originally
designed by
Hamoon Mousavi \cite{Mousavi:2016}, and described in more detail
in \cite{Shallit:2023}.  This free software package can rigorously prove or disprove
first-order claims about automata and sequences of integers they accept.

A brief discussion of {\tt Walnut}'s syntax now follows.  The main
commands are {\tt def} and {\tt eval}; the first defines an automaton
for future use, and the second evaluates a logical assertion with
no free variables as either {\tt TRUE} or {\tt FALSE}.  Once an automaton
is defined, it can be used by prefixing its name with a dollar sign.
The basic logical operations are AND (represented by {\tt \&}),
OR (represented by {\tt |}), NOT (represented by {\tt \char'176}),
implication (represented by {\tt =>}), and IFF (represented by
{\tt <=>}).   The basic arithmetic operations are addition, subtraction,
and multiplication by a constant.  Integer division by a constant
is also allowed.  The universal quantifier $\forall$ is written
{\tt A}, and the existential quantifier $\exists$ is written {\tt E}.
The statement {\tt ?msd\_fib} at the beginning of a {\tt Walnut} command
specifies that inputs should be understood to represent integers
represented in Zeckendorf representation.

As a simple example, the following {\tt Walnut} command defines
an automaton {\tt even} that checks if its input in Zeckendorf
representation represents an even integer.
\begin{verbatim}
def even "?msd_fib Ek n=2*k":
\end{verbatim}
It produces the automaton depicted in Figure~\ref{fig0}.
\begin{figure}[htb]
\begin{center}
\includegraphics[width=5in]{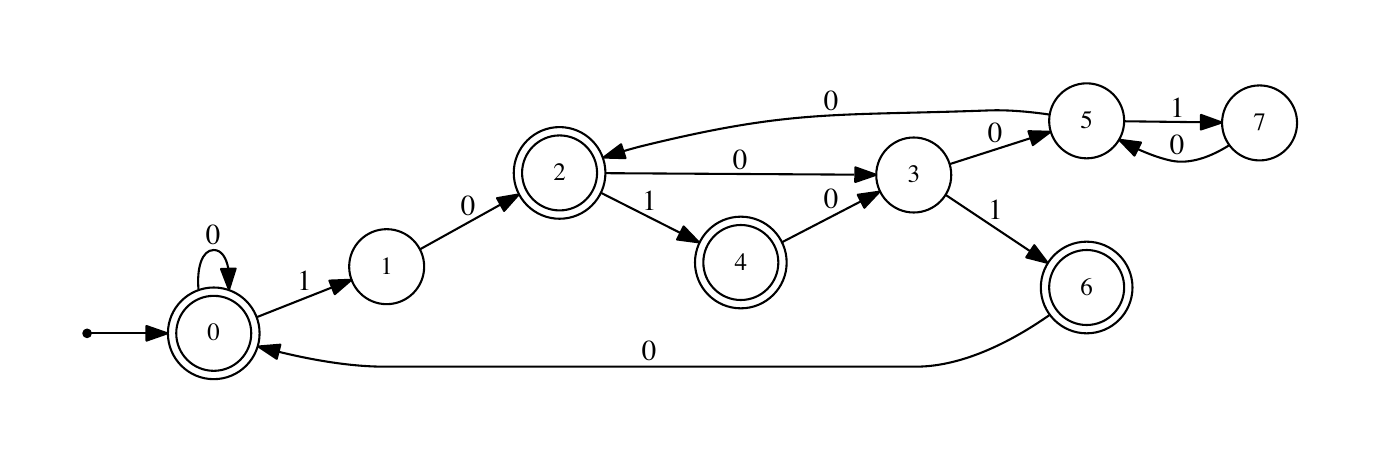}
\caption{Automaton accepting even numbers in Zeckendorf representation.}
\label{fig0}
\end{center}
\end{figure}

We will make use of several {\tt Walnut} automata which were derived
in \cite{Shallit:2023}:
\begin{itemize}
\item {\tt phin}$(n,x)$ accepts iff $x = \lfloor \varphi n \rfloor$;
\item {\tt phi2n}$(n,x)$ accepts iff $x = \lfloor \varphi^2 n \rfloor$;
\item {\tt noverphi}$(n,x)$ accepts iff $x = \lfloor n/\varphi \rfloor$.
\end{itemize}

\section{The theorems}

The fundamental theorem on which everything is based is the following:
\begin{theorem}
There is a $52$-state automaton $\tt m$ that takes three inputs $x,y,z$
expressed in Zeckendorf representation, and accepts
if and only if $A[x,y] = z$.
\end{theorem}

\begin{proof}
We ``guessed'' the automaton {\tt m} using a variant of the Myhill-Nerode theorem,
based on empirical data.  (We used the first 291 rows and 471 columns of
the Hurt-Sada array.)   
This automaton is too large to display here, but it can
be obtained from the author's web page.

Briefly, this is how the guessing procedure 
works: you start with a language $L$ for which you can determine membership, and you suspect $L$ is regular (accepted by an automaton).
In the Myhill-Nerode theorem, states are associated with equivalence classes of input strings; two strings $x$ and $y$ are equivalent if for all strings
$z$ we have $xz \in L$ iff $yz \in L$.   
Then a minimal automaton for the language $L$
is built out of the equivalence classes.  
To heuristically guess an automaton, just change the definition of 
equivalence: now $x$ and $y$ are deemed to be equivalent if $xz \in L$ iff $yz \in L$ for all strings $z$ of length $\leq k$.  Then you 
build the guessed automaton out of the equivalence classes, as usual.
You now do this for larger and larger $k$.
If the number of states stabilizes, you have a candidate automaton.
For more details, see \cite[\S 5.6]{Shallit:2023}.

It now remains to verify that our guessed automaton is correct.
The first step is to establish that {\tt m} the automaton really does
compute a function of the inputs $x$ and $y$.  In other words, for every
$x$ and $y$ there must be exactly one $z$ such that the triple
$(x,y,z)$ is accepted.

We can verify this with the following {\tt Walnut} code.
\begin{verbatim}
eval check_fn1 "?msd_fib Ax,y Ez $m(x,y,z)": 
eval check_fn2 "?msd_fib Ax,y,z1,z2 ($m(x,y,z1) & $m(x,y,z2)) => z1=z2":
\end{verbatim}
and {\tt Walnut} returns {\tt TRUE} for both.

The next step is to verify that indeed {\tt m} computes the Hurt-Sada array.
To do so we use induction on the row number $n$.  The base case, the correctness
of row $0$, can be verified using the following code:
\begin{verbatim}
eval sada_c1 "?msd_fib An $m(0,n,n)":
\end{verbatim}

Now we verify that for each $n$, row $n$ follows from row $n-1$ using
the Hurt-Sada transformation.  Namely, we must check that
\begin{itemize}
\item[(a)] $A[n,j]=A[n-1,j]$ for $0\leq j < i$.
\item[(b)] $A[n,p(n)+n] = n$.
\item[(c)] $A[n,j] = A[n-1,j+1]$ for $p(n) \leq j < p(n)+n$.
\item[(d)] $A[n,j] = A[n-1,j]$ for $j > n+ p(n)$.
\end{itemize}
We can check each condition using the following code:
\begin{verbatim}
def p "?msd_fib $m(n-1,z,n)":
eval case_a "?msd_fib An,i,j,x ($p(n,i) & j<i & $m(n-1,j,x)) => $m(n,j,x)":
eval case_b "?msd_fib An,x,y ($p(n,x) & $m(n,x+n,y)) => y=n":
eval case_c "?msd_fib An,x,y,j ($m(n-1,j+1,x) & $p(n,y) & y<=j & j<y+n)
   => $m(n,j,x)":
eval case_d "?msd_fib An,j,x,z ($m(n-1,j,x) & $p(n,z) & j>n+z) => $m(n,j,x)":
\end{verbatim}
and all return {\tt TRUE}.
\end{proof}

At this point we know the correctness of the automaton {\tt m} computing
the Hurt-Sada array $A$.
We can now use this automaton to rigorously derive and
prove many conjectures about the elements, provided they can be stated
in first-order logic.

As a warmup, let us prove 
\begin{theorem}
Every integer $n$ eventually returns to its starting position in column $n$;
that is, for each $n$ there exists $m$ such that $A[x,n] = n$ for all $x\geq m$.
\end{theorem}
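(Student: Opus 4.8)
The plan is to translate the statement into a first-order sentence over the Zeckendorf-automatic structure and let \texttt{Walnut} verify it, using the automaton \texttt{m} whose correctness was just established. The claim ``every integer $n$ eventually returns to its starting position in column $n$'' says: for each $n$ there is an $m$ such that $A[x,n]=n$ for all $x\ge m$. Since $A[x,n]$ stabilizes to some value as $x\to\infty$ (each entry only moves finitely often), it suffices to assert that this eventual value is $n$. Concretely, I would first observe that once $A[x,n]=n$ holds for some $x$, it holds for all larger $x$ as well — because a cell's entry changes only when some number jumps into or out of it, and after row $n$ the number $n$ never moves again, so column $n$ can only be disturbed by numbers $> n$ jumping \emph{into} it; but $n$ sitting in column $n$ is never jumped over (a number $k>n$ jumping lands in column $p(k)+k > n$ only if it started left of column $n$, which one checks cannot displace the value $n$ from column $n$ permanently). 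Rather than argue this by hand, I would simply encode the whole statement directly.

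The core step is the \texttt{Walnut} query
\begin{verbatim}
eval returns "?msd_fib An Em Ax (x>=m) => $m(x,n,n)":
\end{verbatim}
If \texttt{Walnut} returns \texttt{TRUE}, the theorem is proved. If, for efficiency or because the unbounded quantifier alternation is expensive, this does not terminate quickly, I would instead prove a cleaner equivalent: show that for every $n$ we have $A[x,n]=n$ for all $x \ge n + p(n)$ (or some other explicit threshold readable from the array), via
\begin{verbatim}
eval returns2 "?msd_fib An,x (x >= n) => $m(x,n,n)":
\end{verbatim}
possibly after first checking the small-case threshold with a bounded search, and then deducing the ``eventually'' statement as an immediate logical consequence. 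One could even exhibit the stabilization row explicitly by computing an automaton for the function $n \mapsto (\text{least } m \text{ with } A[m,n]=n)$ and checking it is total.

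The main obstacle I anticipate is not mathematical but computational: the sentence as written contains the quantifier block $\forall n\, \exists m\, \forall x$, and \texttt{Walnut}'s handling of a universal quantifier nested inside an existential can blow up the intermediate automata. The remedy is the standard one in this line of work — replace the existential witness $m$ by an explicit closed-form bound (a Zeckendorf-definable function of $n$, such as $n+p(n)$ or simply $n$ if that already works), turning the statement into a single $\forall$ sentence that \texttt{Walnut} can dispatch directly. Verifying that the chosen bound is in fact large enough is itself a routine \texttt{Walnut} check against the automaton \texttt{m}, so the whole argument remains fully rigorous and machine-checked.
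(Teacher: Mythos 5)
Your primary step is exactly the paper's proof: the query \verb|eval return "?msd_fib An Em Ax (x>=m) => $m(x,n,n)"| is precisely what the paper runs, and \texttt{Walnut} returns \texttt{TRUE}, so the approach is correct and identical. (One small caveat: your proposed fallback threshold $x\ge n$ would not work, since e.g.\ $A[4,4]=3$; by the paper's Theorem~3 the correct stabilization row is $\lfloor (n+1)\varphi\rfloor-1$.)
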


\begin{proof}
We use the following {\tt Walnut} code.
\begin{verbatim}
eval return "?msd_fib An Em Ax (x>=m) => $m(x,n,n)":
\end{verbatim}
And {\tt Walnut} returns {\tt TRUE}.
\end{proof}

Now that we know that each integer eventually returns to its starting position,
we can ask about where this return happens for the first time.

\begin{theorem}
The integer $n$ first leaves the $n$'th column at row $\lfloor (n+1)/\varphi \rfloor$,
and returns to the $n$'th column at row $\lfloor (n+1)\varphi \rfloor - 1$.
From then on $n$ remains in the $n$'th column.
\end{theorem}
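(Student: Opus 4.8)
The plan is to express each of the three assertions as a first-order sentence about the automaton {\tt m} of Theorem 1, together with the auxiliary automata {\tt noverphi} and {\tt phin} from \cite{Shallit:2023}, and then to discharge them with Walnut. Throughout, $n$ is taken to be $\geq 1$: the entry $0$ never moves, so it ``never leaves'' column $0$ and the statement fails for it; this edge case must be excluded by hand.

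For the first assertion --- that $n$ first leaves the $n$th column at row $r := \lfloor (n+1)/\varphi \rfloor$ --- I would formalize it as: $A[x,n] = n$ for every $x < r$, and $A[r,n] \neq n$. In Walnut this reads roughly
\begin{verbatim}
eval first_leave "?msd_fib An,r ((n>=1 & $noverphi(n+1,r)) =>
   ((Ax (x<r) => $m(x,n,n)) & ~$m(r,n,n)))":
\end{verbatim}
where {\tt noverphi(n+1,r)} pins $r$ to $\lfloor (n+1)/\varphi \rfloor$. A glance at Table~\ref{tab1} makes this plausible: for $n = 4$ one has $r = 3$, and indeed $A[0,4] = A[1,4] = A[2,4] = 4$ while $A[3,4] = 5$.

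For the remaining two assertions --- that $n$ returns to the $n$th column at row $R := \lfloor (n+1)\varphi \rfloor - 1$ and from then on stays there --- I would set $r := \lfloor (n+1)\varphi \rfloor$ via {\tt phin(n+1,r)} and require $A[x,n] = n$ for all $x \geq r-1$, together with $A[r-2,n] \neq n$ so that row $r-1$ really is where the permanent return begins. Since $n \geq 1$ forces $r \geq 3$, the truncated subtractions $r-1$ and $r-2$ behave as ordinary subtraction. In Walnut:
\begin{verbatim}
eval permanent_return "?msd_fib An,r ((n>=1 & $phin(n+1,r)) =>
   ((Ax (x>=r-1) => $m(x,n,n)) & ~$m(r-2,n,n)))":
\end{verbatim}
Both commands should return TRUE, proving the theorem. (With no extra effort the same method verifies the stronger fact that $A[x,n] \neq n$ for \emph{every} $x$ with $\lfloor (n+1)/\varphi \rfloor \leq x \leq \lfloor (n+1)\varphi \rfloor - 2$, i.e., $n$ is absent from column $n$ on one contiguous block of rows, without oscillating in and out.)

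I do not expect a genuine mathematical obstacle: once {\tt m} is known to be correct by Theorem 1, Walnut decides all of this automatically. The only points requiring care are in setting up the formulas --- inserting the $n \geq 1$ hypothesis, getting the ``$-1$'' and ``$-2$'' offsets right, and phrasing ``returns at row $R$'' as ``$A[x,n] = n$ for all $x \geq R$ but $A[R-1,n] \neq n$'' so that it encodes the intended claim (the first row of the permanent return) rather than merely some row on which $n$ happens to occupy column $n$.
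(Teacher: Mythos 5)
Your proposal is correct and is essentially the paper's own proof: the paper likewise reduces each claim to a first-order sentence over the automaton {\tt m} together with {\tt noverphi} and {\tt phin} and lets {\tt Walnut} verify it, merely phrasing the implications in the opposite direction (defining predicates {\tt leaves} and {\tt returns} and showing any witness must equal the closed form, rather than instantiating the closed form and checking it has the defining properties). Your version has the small advantage of asserting existence of the leave/return rows directly rather than vacuously, but the substance is the same.
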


\begin{proof}
We use the following code.
\begin{verbatim}
def leaves "?msd_fib (Ai i<z => $m(i,n,n)) & ~$m(z,n,n)":
eval leaves_thm "?msd_fib An,z $leaves(n,z) => $noverphi(n+1,z)":
def returns "?msd_fib (Aj (j>=z) => $m(j,n,n)) & ~$m(z-1,n,n)": 
eval returns_thm "?msd_fib An,z $returns(n,z) => $phin(n+1,z+1)":
\end{verbatim}
And {\tt Walnut} returns {\tt TRUE} for all of them.
\end{proof}

Let us now find a simple formula for $p(n)$, the location of $n$ in row $n-1$:
\begin{theorem}
We have $p(n) = \lfloor (n+1)/\varphi \rfloor$.
\end{theorem}
\begin{proof}
We use the following code:
\begin{verbatim}
def p "?msd_fib $m(n-1,z,n)":
eval p_check "?msd_fib An,z $p(n,z) => $noverphi(n+1,z)":
\end{verbatim}
\end{proof}

We can now find a simple closed form for Sada's sequence $2,1,2,5,\ldots$.
Recall that $\{x \} := x \bmod 1$, the fractional part of $x$.
\begin{theorem}
If $\{ (n+1)\varphi \} < 2-\varphi$, then $s(n) = n+1$.  Otherwise,
$s(n) = \lfloor (n+1)/\varphi \rfloor$.
\end{theorem}

\begin{proof}
Note that $0 \leq (n+1)\varphi < 2-\varphi$ if and only if
there exists $k$ such that $n = \lfloor k \varphi \rfloor - 1$.
We can now verify the claim with the following {\tt Walnut} code.
The first line defines an automaton, {\tt sad}, which computes
$s(n)$.
\begin{verbatim}
def sad "?msd_fib Ex $p(n,x) & $m(n-1,x+1,z)":
eval thma "?msd_fib An (Ek $phi2n(k,n+1)) <=> $sad(n,n+1)":
eval thmb "?msd_fib An (n>0) => 
   ((~Ek $phi2n(k,n+1)) <=> (Ex $sad(n,x) & $noverphi(n+1,x)))":
\end{verbatim}
\end{proof}

\begin{theorem}
No integer appears three times or more in $(s(n))_{n \geq 1}$.
Furthermore, $n$ appears exactly twice if and only if 
$n$ belongs to sequence \seqnum{A001950}, that is, if and only if there
exists $k \geq 1$ such that $n = \lfloor k \varphi^2 \rfloor$.
\end{theorem}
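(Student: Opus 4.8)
The plan is to delegate both assertions to Walnut, exactly in the style of the preceding theorems, using the automaton $\tt sad$ which — by the proof of the closed‑form theorem for $s(n)$ — recognizes the graph of $s$, so that $\tt sad$ accepts $(n,v)$ iff $n\geq 1$ and $s(n)=v$. For the first assertion I would check that no value is attained at three distinct indices, i.e.\ that there are no $n_1<n_2<n_3$ and $v$ with $s(n_1)=s(n_2)=s(n_3)=v$. Once that is known, ``$n$ appears exactly twice'' is equivalent to ``$n$ appears at least twice'', so for the second assertion it suffices to check that the set of $n$ for which there exist $n_1<n_2$ with $s(n_1)=s(n_2)=n$ coincides with $\{\lfloor k\varphi^2\rfloor : k\geq 1\}$, the latter being recognized by the automaton $\tt phi2n$ together with a guard $k\geq 1$. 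Concretely I expect the following two commands to return {\tt TRUE}:
\begin{verbatim}
eval no_three "?msd_fib ~En1,n2,n3,v (n1<n2 & n2<n3 &
   $sad(n1,v) & $sad(n2,v) & $sad(n3,v))":
eval twice_iff "?msd_fib An (En1,n2 (n1<n2 & $sad(n1,n) & $sad(n2,n)))
   <=> (Ek (k>=1 & $phi2n(k,n)))":
\end{verbatim}
Combining {\tt no\_three} and {\tt twice\_iff} then yields the theorem.

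It is worth recording \emph{why} these will come out true, since the Walnut run is opaque. By the closed‑form theorem there is a dichotomy: if $n+1$ is an upper Wythoff number, say $n+1=\lfloor k\varphi^2\rfloor$ with $k\geq 1$, then $s(n)=n+1$; otherwise $n+1$ is a lower Wythoff number and $s(n)=\lfloor(n+1)/\varphi\rfloor$. In the first branch $s$ restricts to the injection $n\mapsto n+1$ on the indices $n$ with $n+1$ upper Wythoff, hence hits every upper Wythoff number exactly once. In the second branch, writing $n+1=\lfloor k\varphi\rfloor$ and using the Wythoff identity $\lfloor\lfloor k\varphi\rfloor/\varphi\rfloor=k-1$, one gets $s(n)=k-1$; as $n+1$ ranges over the lower Wythoff numbers $\geq 2$ this hits every positive integer exactly once. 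Therefore every positive integer occurs at least once, the upper Wythoff numbers occur exactly twice, and nothing occurs three times — precisely what the two commands verify mechanically. (One could also bypass $\tt sad$ and phrase the queries directly in terms of this dichotomy and the automata $\tt phi2n$ and $\tt noverphi$.)

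The mathematical content is shallow once the fundamental automaton $\tt m$ is in hand, so the ``hard part'' here is really bookkeeping rather than depth. The one point that genuinely needs care is the low end: $s$ is defined only for $n\geq 1$, the value $0$ never occurs in $(s(n))_{n\geq 1}$, and $0\notin\seqnum{A001950}$, so the $k\geq 1$ guard in {\tt twice\_iff} is essential — without it the $n=0$ instance of the biconditional would be false and Walnut would reject the whole statement (note that ${\tt sad}$ already recognizes nothing at $n=0$, so no extra guard on $n_1,n_2$ is needed). A secondary, purely practical concern is that {\tt twice\_iff} forms a product of two copies of $\tt sad$ and then projects away two quantified variables; if the intermediate automata grow unwieldy one should be prepared to precompute an automaton for the ``occurs at least twice'' predicate, though in practice Walnut handles queries of this size routinely.
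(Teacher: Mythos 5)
Your proposal is correct and follows essentially the same route as the paper: the paper's proof consists of exactly the two {\tt Walnut} queries you wrote (named {\tt no3} and {\tt twice}), applied to the automaton {\tt sad} and {\tt phi2n} with the same $k\geq 1$ guard. Your added explanation via the Wythoff dichotomy and the identity $\lfloor\lfloor k\varphi\rfloor/\varphi\rfloor=k-1$ is a sound bonus the paper omits.
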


\begin{proof}
To check the first assertion we use the following code:
\begin{verbatim}
eval no3 "?msd_fib ~Ei,j,k,n i<j & j<k & $sad(i,n) & $sad(j,n) & $sad(k,n)":
\end{verbatim}
which returns {\tt TRUE}.

For the second we write
\begin{verbatim}
eval twice "?msd_fib An (Ei,j i<j & $sad(i,n) & $sad(j,n)) <=> 
   (Ek k>=1 & $phi2n(k,n))":
\end{verbatim}
\end{proof}

We now turn to studying the sequence $t(n)$.
\begin{theorem}
We have $t(n) = \lfloor (n+1) \varphi \rfloor - 1$.
\end{theorem}

\begin{proof}
We use the following code:
\begin{verbatim}
def t "?msd_fib Ex $p(n,x) & $m(n-1,n+x,z)":
eval test_t "?msd_fib An,x,y ($t(n,x) & $phin(n+1,y)) => x+1=y":
\end{verbatim}
and {\tt Walnut} returns {\tt TRUE}.
\end{proof}

We now study the diagonal sequence $d(n)$.  Values of this sequence
split naturally into two kinds:  those $n$ for which $d(n) \geq n$
and those $n$ for which $d(n) < n$.  The former have
$d(n)$ approximately equal to $1.236n$, while the latter have
$d(n)$ approximately equal to $.7639n$.  This is made more precise
in the next theorem.

\begin{theorem}
There is an $8$-state automaton that decides whether
$d(n) \geq n$ or $d(n) < n$.  It is depicted in Figure~\ref{dlgn}.

Further the diagonal sequence $d(n) = A[n,n]$ satisfies
$d(n) < n  \implies d(n) = \lfloor (4-2\varphi)n + (5-3\varphi) \rfloor$ and
$d(n) \geq n \implies d(n) = \lfloor (2\varphi-2)n\rfloor + 1$.
\end{theorem}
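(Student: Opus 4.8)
The plan is to reduce the statement, as in the earlier theorems, to assertions decidable by {\tt Walnut} using the verified automaton {\tt m} for $A$ together with the known automaton {\tt phin}. First I would define an automaton for the diagonal,
\begin{verbatim}
def dd "?msd_fib $m(n,n,x)":
\end{verbatim}
which accepts $(n,x)$ iff $d(n)=A[n,n]=x$; this is a genuine function of $n$ because {\tt m} was already shown to compute one. The set $\{\, n : d(n)\ge n\,\}$ is then defined by the first-order formula $\exists x\,({\tt dd}(n,x)\wedge x\ge n)$ over a Fibonacci-automatic structure, so it is regular and has a unique minimal DFA; since {\tt Walnut} minimizes automatically, simply issuing
\begin{verbatim}
def dge "?msd_fib Ex $dd(n,x) & x>=n":
\end{verbatim}
produces it, and one reads off that it has $8$ states (its complement, deciding $d(n)<n$, has the same number). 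This automaton is Figure~\ref{dlgn}; no ingenuity is required for this part beyond inspecting {\tt Walnut}'s output.

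For the two closed forms the one point that needs care is that {\tt Walnut} permits multiplication only by rational constants, so I must rewrite each irrational-coefficient floor in terms of $\lfloor\varphi m\rfloor$ for an integer linear form $m$ in $n$, using $\varphi^2=\varphi+1$ and the irrationality of $\varphi$. Since $2n$ is an integer,
\[
\lfloor(2\varphi-2)n\rfloor+1 \;=\; \lfloor\varphi(2n)\rfloor-2n+1,
\]
and since $(4-2\varphi)n+(5-3\varphi)=(4n+5)-\varphi(2n+3)$ with $\varphi(2n+3)\notin\mathbb{Z}$ for $n\ge 0$,
\[
\lfloor(4-2\varphi)n+(5-3\varphi)\rfloor \;=\; (4n+5)-\bigl(\lfloor\varphi(2n+3)\rfloor+1\bigr) \;=\; 4n+4-\lfloor\varphi(2n+3)\rfloor .
\]
Both right-hand sides are now expressible via {\tt phin} and linear integer arithmetic, which is exactly what the verification needs.

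It then remains to evaluate
\begin{verbatim}
eval d_lt "?msd_fib An,x,y (n>=1 & $dd(n,x) & x<n & $phin(2*n+3,y))
   => x+y=4*n+4":
eval d_ge "?msd_fib An,x,y (n>=1 & $dd(n,x) & x>=n & $phin(2*n,y))
   => x+2*n=y+1":
\end{verbatim}
and I expect {\tt Walnut} to return {\tt TRUE} for both, which finishes the proof. The {\tt n>=1} guard is not cosmetic: at $n=0$ one has $d(0)=0$, which falls in the ``$d(n)\ge n$'' class, yet $\lfloor(2\varphi-2)\cdot 0\rfloor+1=1\ne 0$, so the ``$\ge$'' formula genuinely fails there and the statement is understood for $n\ge1$. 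The step I expect to be trickiest is the algebraic reduction of the two $\varphi$-floors above (getting the integer corrections and the floor/ceiling swap exactly right); apart from that and the $n=0$ caveat, the theorem is a mechanical application of the framework, the real difficulty — guessing and verifying {\tt m} — having already been dealt with in the fundamental theorem.
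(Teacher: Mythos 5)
Your proposal is correct and follows essentially the same route as the paper: the paper defines {\tt d} and {\tt dlgn} exactly as your {\tt dd} and {\tt dge}, and its two {\tt eval} checks ({\tt x+y=4*n+4} against {\tt phin(2*n+3,y)}, and {\tt x+2*n=y+1} against {\tt phin(2*n,y)}) match your reductions of the irrational-coefficient floors, including the {\tt n>=1} guard on the $d(n)\ge n$ case. The algebraic rewriting via $\varphi(2n+3)\notin\mathbb{Z}$ is exactly the (implicit) content of the paper's formulas, so nothing is missing.
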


\begin{figure}[htb]
\begin{center}
\includegraphics[width=5in]{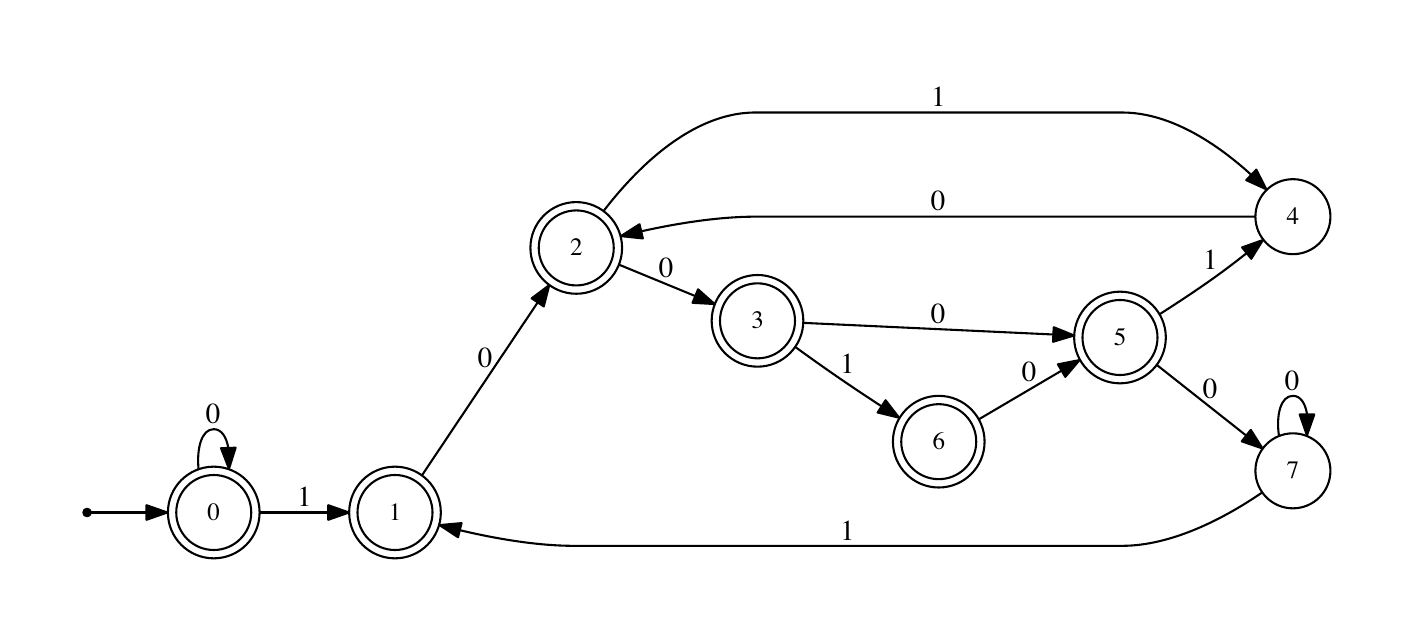}
\end{center}
\caption{The automaton that decides if $d(n)\geq n$.}
\label{dlgn}
\end{figure}

\begin{proof}
We use the following {\tt Walnut} code.
\begin{verbatim}
def d "?msd_fib $m(n,n,z)":
def dlgn "?msd_fib Ex $d(n,x) & x>=n":
\end{verbatim}
This creates the automaton in Figure~\ref{dlgn}.

Now we need to verify two assertions.
For the first assertion, we use the following code:
\begin{verbatim}
eval dltn "?msd_fib An,x,y (~$dlgn(n) & $d(n,x) & $phin(2*n+3,y)) 
   => x+y=4*n+4":
\end{verbatim}
The second can be verified as follows:
\begin{verbatim}
eval dgn "?msd_fib An,x,y (n>=1 & $dlgn(n) & $d(n,x) & $phin(2*n,y)) 
   => x+2*n=y+1":
\end{verbatim}
\end{proof}

In exactly the same way we can prove a theorem about the diagonal
sequence $d'(n)$.  
\begin{theorem}
There is a 6-state automaton that decides whether
$d'(n) \geq n$.  It is depicted in Figure~\ref{dpfig}.  If $d'(n)\geq n$ then
$d'(n) = \lfloor (2\varphi - 2)n + {1\over 2} \rfloor$.
If $d'(n) < n$ then
$d'(n) = \lfloor (4-2 \varphi)n \rfloor$.
\end{theorem}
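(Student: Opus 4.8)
The plan is to repeat, essentially verbatim, the argument used for the diagonal sequence $d(n)$, now with $d(n) = A[n,n]$ replaced by $d'(n) = A[n-1,n]$. First I would introduce an automaton for $d'$ directly from the fundamental automaton {\tt m}, together with the automaton deciding the split:
\begin{verbatim}
def dp "?msd_fib $m(n-1,n,z)":
def dpgn "?msd_fib Ex $dp(n,x) & x>=n":
\end{verbatim}
By Theorem~1 the automaton {\tt m} is the graph of a function, so {\tt dp} is automatically the graph of a function and no separate well-definedness check is needed; one only keeps the convention $n \ge 1$ so that $n-1$ makes sense. {\tt Walnut} minimizes {\tt dpgn} to the $6$-state automaton of Figure~\ref{dpfig}, and its correctness is guaranteed by construction, since it is built from the exact predicate $d'(n) \ge n$.

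Next I would rewrite the two claimed closed forms so that they mention only floors of $\varphi$-multiples, integer constants, and division by $2$ — the only arithmetic {\tt Walnut} supports. Using that $\varphi$ is irrational (so $m\varphi \notin \mathbb Z$ for $m \ge 1$) and the elementary identity $\lfloor x/2\rfloor = \lfloor \lfloor x\rfloor/2\rfloor$, the branch $d'(n)<n$ becomes
\[
\bigl\lfloor (4-2\varphi)n \bigr\rfloor \;=\; 4n - \lfloor 2n\varphi \rfloor - 1 \qquad (n \ge 1),
\]
since $(4-2\varphi)n = 4n - 2n\varphi$; and the half-integer shift in the branch $d'(n)\ge n$ is removed by doubling, $(2\varphi-2)n + \tfrac12 = \tfrac{4n\varphi - 4n + 1}{2}$, giving
\[
\Bigl\lfloor (2\varphi-2)n + \tfrac12 \Bigr\rfloor \;=\; \Bigl\lfloor \tfrac{\lfloor 4n\varphi \rfloor + 1}{2} \Bigr\rfloor - 2n .
\]
Both right-hand sides are expressible with the {\tt phin} automaton of Section~5 together with subtraction and division by $2$, and both numerators are nonnegative for $n\ge1$ (because $4n\varphi > 4n$ and $2n\varphi > 2n$), so {\tt Walnut}'s floor-division behaves as intended.

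The verification is then two {\tt eval} calls, structurally identical to {\tt dltn} and {\tt dgn}:
\begin{verbatim}
eval dpltn "?msd_fib An,x,y (n>=1 & ~$dpgn(n) & $dp(n,x) & $phin(2*n,y))
   => x+y+1=4*n":
eval dpgen "?msd_fib An,x,y (n>=1 & $dpgn(n) & $dp(n,x) & $phin(4*n,y))
   => x+2*n=(y+1)/2":
\end{verbatim}
and one confirms {\tt Walnut} returns {\tt TRUE} for both; the single boundary case $n=0$, where $n-1$ is meaningless, is checked by hand against Table~\ref{tab2}. I do not expect a genuine obstacle: the proof is a copy of the proof of the preceding theorem, and the only points needing care are the conversion of $\lfloor\,\cdot + \tfrac12\rfloor$ into an integer-arithmetic predicate via the doubling identity above, and remembering that {\tt Walnut}'s division rounds down — any error would surface as an off-by-one in those rewrites or in the $n \le 1$ boundary, all of which are checked directly.
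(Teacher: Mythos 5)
Your proposal is correct and follows essentially the same route as the paper: define {\tt dp} from {\tt m} via $A[n-1,n]$, build the comparison automaton for $d'(n)\ge n$, rewrite the two closed forms in terms of $\lfloor 2n\varphi\rfloor$ and $\lfloor 4n\varphi\rfloor$ with integer division by $2$, and discharge each branch with a single {\tt eval}; your identities $x+y+1=4n$ and $x+2n=\lfloor(y+1)/2\rfloor$ are algebraically the same checks the paper runs. The extra care you take with the floor-of-half-integer rewriting and the $n\le 1$ boundary is sound but does not change the argument.
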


\begin{figure}[htb]
\begin{center}
\includegraphics[width=5in]{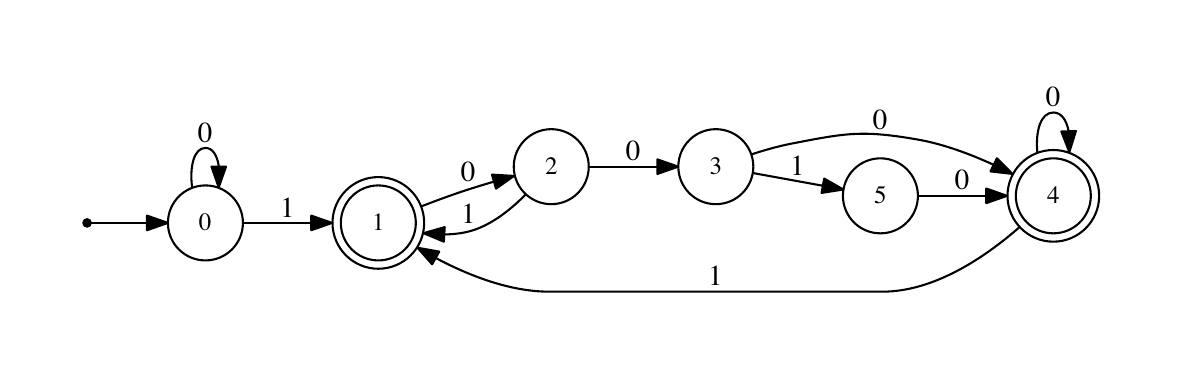}
\end{center}
\caption{The automaton that decides if $d'(n)\geq n$.}
\label{dpfig}
\end{figure}

\begin{proof}
We use the same idea as before.
\begin{verbatim}
def dp "?msd_fib $m(n-1,n,z)":
def dpg "?msd_fib Ez $dp(n,z) & z>=n":
eval dpgn "?msd_fib An,x,y (n>=1 & $dpg(n) & $dp(n,x) & $phin(4*n,y))
   => x=((y+1)-4*n)/2":
eval dpltn "?msd_fib An,x,y (n>=1 & ~$dpg(n) & $dp(n,x) & $phin(2*n,y))
   => x+1+y=4*n":
\end{verbatim}
\end{proof}

\section{Row characteristics}

Clearly, for all $n$,
the $n$'th row of $A$ consists of three different regions:  an 
initial segment, where $i$ appears in column $i$ for $0 \leq i \leq b(n)$;
a middle segment that consists of some permutation
of $b(n)+1, \ldots, c(n)-1$,
and a final (infinite) segment, where again $i$ appears in column $i$
for $c(n) \leq i < \infty$.  Furthermore, it appears that the permutation
in the middle segment leaves no element fixed.  We now find
a simple formula for $b(n)$, $c(n)$, and prove these claims.

\begin{theorem}
We have $b(n) = \lfloor (n+2)/\varphi \rfloor - 1$ for $n \geq 1$.
We also have $c(n) = \lfloor (n+1)\varphi \rfloor$ for $n \geq 1$.
Finally, in row $n$, the elements in columns
$b(n)+1,\ldots, c(n)-1$ form a permutation of
those elements, a permutation that leaves no element fixed.
\end{theorem}

\begin{proof}
We start with automata that compute $b(n)$ and $c(n)$:
\begin{verbatim}
def b "?msd_fib (~$m(n,z+1,z+1)) & At (t<=z) => $m(n,t,t)":
def c "?msd_fib (~$m(n,z-1,z-1)) & At (t>=z) => $m(n,t,t)":
\end{verbatim}
Now we prove the first two claims of the theorem.
\begin{verbatim}
eval test_b "?msd_fib An,x,y (n>=1 & $noverphi(n+2,x) & $b(n,y)) => y+1=x":
eval test_c "?msd_fib An,x,y (n>=1 & $phin(n+1,x) & $c(n,y)) => y=x":
\end{verbatim}
Next we prove the last two claims.
\begin{verbatim}
eval test_perm1 "?msd_fib An,x,y,i (n>=1 & $b(n,x) & $c(n,y) & i>x & i<y) =>
   Ej j>x & j<y & $m(n,i,j)":
eval no_fixed_point "?msd_fib ~En,x,y,i n>=1 & $b(n,x) & $c(n,y) & i>x
   & i<y & $m(n,i,i)":
\end{verbatim}
\end{proof}

\section{Antidiagonals}

With the automaton {\tt m} one can very easily explore other facets of 
the Hurt-Sada array.   For example, consider the $n$'th antidiagonal, that is,
the elements $A[0,n], A[1,n-1], A[2,n-2], \ldots, A[n,0]$.

\begin{theorem}
There are functions $h,h',r$ with the following properties.
Every antidiagonal of the Hurt-Sada array starts with
$n$ in row $0$ and decreases by $1$ with each succeeding row up to
row $h(n)-1$.
Then it takes the value $r(n)$ for consecutive
rows $h(n), h(n)+1, \ldots, h'(n)$.
Then, starting at row $h'(n)+1$, 
it decreases again by $1$ at each step until it hits $0$ in row $n$.
\end{theorem}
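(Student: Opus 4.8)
The plan is to read the antidiagonal off from the row structure proved in the previous theorem. In row $i$ we have $A[i,j]=j$ exactly when $j\le b(i)$ or $j\ge c(i)$, while columns $b(i)+1,\ldots,c(i)-1$ carry a derangement of themselves. The $n$-th antidiagonal meets column $n-i$ in row $i$. Since $b$ and $c$ are nondecreasing while $n-i$ strictly decreases with $i$, the set $S_n:=\{\, i : 0\le i\le n,\ A[i,n-i]\neq n-i \,\}$ equals $\{\, i : b(i)<n-i<c(i) \,\}$ (the inclusion from right to left uses the no-fixed-point property of the middle block), and this is an interval $[h(n),h'(n)]$, possibly empty. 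For $i\notin S_n$ we have $A[i,n-i]=n-i$, so the antidiagonal begins at $A[0,n]=n$ and decreases by $1$ per row through row $h(n)-1$, and equally it equals $n-i$ from row $h'(n)+1$ on, decreasing by $1$ until $A[n,0]=0$. The only genuine assertion is therefore that $A[i,n-i]$ is constant across $h(n)\le i\le h'(n)$; that common value is $r(n)$.

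To make this rigorous I would, in the style of the earlier proofs, let {\tt Walnut} build automata for the least and greatest rows of $S_n$ and the candidate constant value, and then verify the one substantive statement together with a couple of trivialities:
\begin{verbatim}
def h "?msd_fib z<=n & ~$m(z,n-z,n-z) & (Ai (i<z) => $m(i,n-i,n-i))":
def hp "?msd_fib z<=n & ~$m(z,n-z,n-z) & (Ai (i>z & i<=n) => $m(i,n-i,n-i))":
def r "?msd_fib Ez $h(n,z) & $m(z,n-z,v)":
eval middle "?msd_fib An,a,b,v,i ($h(n,a) & $hp(n,b) & $r(n,v)
   & a<=i & i<=b) => $m(i,n-i,v)":
eval hits_zero "?msd_fib An $m(n,0,0)":
eval nonempty "?msd_fib An (n>=2) => Ez $h(n,z)":
\end{verbatim}
Here {\tt h}, {\tt hp}, {\tt r} are single-valued --- the least (resp.\ greatest) element of a set is unique --- and accept precisely when $S_n\neq\emptyset$; the only exceptional $n$ are $0$ and $1$, where the antidiagonal is just $n,n-1,\ldots,0$ and there is nothing to prove. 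The ``decreases by $1$'' claims on the two outer stretches require no further check: they are exactly what the defining formulas of {\tt h} and {\tt hp} say.

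If closed forms are wanted, the usual guess-and-verify loop yields $h(n)=n-b(n)=\lfloor(n+2)/\varphi^2\rfloor$ and $h'(n)=\lfloor(n+3)/\varphi\rfloor-2$ for $n\ge 2$ (one {\tt eval} each against {\tt noverphi}, {\tt phi2n}, and {\tt b}), so that $r(n)=A[\,n-b(n),\,b(n)\,]$. As with the diagonal sequence $d(n)$, the value $r(n)$ appears to split into two regimes according to whether it lies above or below the block of columns $\{\, n-i : h(n)\le i\le h'(n) \,\}$; in each regime it should be a linear function of $n$ with golden-ratio coefficients, chosen by a small automaton, with both formulas guessed from the data and verified by a single {\tt eval}.

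The {\tt Walnut} verifications above are routine once the automaton {\tt m} is available; the only real work is formulating the statement correctly --- recognizing that the middle segment must be one constant block $r(n)$ and that the fixed-point structure of the rows forces the two outer stretches to be $n-i$, which collapses the whole theorem to the single {\tt middle} check. If one demands explicit closed forms, the mild remaining obstacle is pinning down the two-case formula for $r(n)$.
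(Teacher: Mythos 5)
Your proposal is correct in substance but parametrizes the theorem differently from the paper, and the difference is worth noting. The paper first proves the antidiagonal is non-increasing ({\tt decreasing}), then defines $r(n)$ as the unique value occurring at least twice ({\tt occurs\_twice}, {\tt never2}, {\tt has1}), takes $h(n),h'(n)$ to be the first and last rows where that repeated value occurs, and verifies the exact decrease-by-one on the two outer stretches ({\tt check\_decreasing1}, {\tt check\_decreasing2}); constancy on the middle then comes for free from monotonicity. You instead define $h(n),h'(n)$ as the first and last rows where the antidiagonal entry differs from $n-i$, which makes both outer stretches true by definition and collapses the whole theorem to the single {\tt middle} check; your motivation via the row-structure theorem ($b$, $c$ nondecreasing, deranged middle block) is sound, and the {\tt middle} check is indeed true, because every row where the entry differs from $n-i$ lies inside the block on which the paper's antidiagonal equals its repeated value. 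Be aware, though, that your functions are genuinely different from the paper's: for $n=4$ the antidiagonal is $4,3,3,1,0$, the paper has $h(4)=1$, $h'(4)=2$, while your deviation-based definition gives $h(4)=h'(4)=2$ because $A[1,3]=3$ is a coincidental fixed point (such a fixed point can only occur at an endpoint of the constant block, which is why your interval claim and the {\tt middle} check survive). Since the theorem only asserts the existence of functions $h,h',r$ with the stated properties, your choice proves the statement just as well, and arguably more economically (one substantive {\tt eval} instead of several); the trade-off is that the paper's choice of $h'$ is the one later identified with \seqnum{A319433} and used in the closed form $h(n)=2n+4-\lfloor(n+3)\varphi\rfloor$, so your parametrization would not directly support those follow-up identifications, even though your $r$ agrees with the paper's. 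The handling of $n=0,1$ (empty deviation set) is a trivial loose end you already acknowledge.
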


\begin{proof}
We start by showing that each antidiagonal is a decreasing sequence.
\begin{verbatim}
eval decreasing "?msd_fib Ai,n,x,y (i<n & $m(i,n-i,x) &
   $m(i+1,n-(i+1),y)) => x>=y":
\end{verbatim}
Next we create an automaton that accepts $(n,x)$ if $x$ appears at least
twice in the $n$'th antidiagonal.
\begin{verbatim}
def occurs_twice "?msd_fib Ei,j i<j & j<=n & $m(i,n-i,x) & $m(j,n-j,x)":
\end{verbatim}
Next we check that an antidiagonal has at most one such element,
and every antidiagonal of index $2$ or more has at least one.
This shows it is unique for $n\geq 2$.
\begin{verbatim}
eval never2 "?msd_fib An ~Ex,y x!=y & $occurs_twice(n,x) & $occurs_twice(n,y)":
eval has1 "?msd_fib An (n>=2) => Ex $occurs_twice(n,x)":
\end{verbatim} 
We can now define the function $r$, where we arbitrarily choose
$r(0) = 0$ and $r(1) = 1$.
\begin{verbatim}
def r "?msd_fib (n=0&z=0)|(n=1&z=1)|(n>1 & $occurs_twice(n,z))":
\end{verbatim}
Similarly, we can now define the functions $h$ and $h'$:
\begin{verbatim}
def h "?msd_fib (n<=1&i=0) | (n>=2 & Ex $occurs_twice(n,x) & $m(i,n-i,x) & 
   Aj (j<i) => ~$m(j,n-j,x))":
def hp "?msd_fib (n<=1&i=0) | (n>=2 & Ex $occurs_twice(n,x) & $m(i,n-i,x) &
   Aj (j>i & j<=n) => ~$m(j,n-j,x))":
\end{verbatim}
Now we check the decreasing property:
\begin{verbatim}
eval check_decreasing1 "?msd_fib An,i,x,y,z (n>=2 & $h(x,n) &
   i<=x-2 & $m(i,n-i,y) & $m(i+1,n-(i+1),z)) => y=z+1":
eval check_decreasing2 "?msd_fib An,i,x,y,z (n>=2 & $hp(x,n) &
   i>x & i<n & $m(i,n-i,y) & $m(i+1,n-(i+1),z)) => y=z+1":
\end{verbatim}

\begin{table}[htb]
\begin{center}
\begin{tabular}{c|ccccccccccccccccccccccc}
$n$ & 0& 1& 2& 3& 4& 5& 6& 7& 8& 9&10&11&12&13&14&15&16&17&18&19& 20\\
\hline
$r(n)$ &  0& 1& 2& 1& 3& 2& 4& 5& 3& 6& 7& 4& 8& 5& 9&10& 6&11& 7&12&13&\\
$h(n)$ &  0&0&0&1&1&2&2&2&3&3&3&4&4&5&5&5&6&6&7&7&7&\\
$h'(n)$ & 0& 0& 1& 2& 2& 3& 3& 4& 5& 5& 6& 7& 7& 8& 8& 9&10&10&11&11&12&\\ 
\end{tabular}
\end{center}
\caption{The functions $r,h,h'$.}
\label{tab4}
\end{table}
\end{proof}

When we look up $r(n)$ in the OEIS, we find a match with
\seqnum{A026272}.   
Similarly, when we look up $h'(n)$ we find
a match for \seqnum{A319433}.
Let us now prove these are really identical!

The definition of \seqnum{A026272} is as follows:
$a(n)$ is the smallest integer $k$ such that $k = a(n-k-1)$ is the only
appearance of $k$ so far; if there is no such $k$ then
$a(n)$ = least positive integer that has not appeared yet.

We can verify that our sequence $r(n)$ has this property:
\begin{verbatim}
def prop "?msd_fib $r(n-(k+1),k) & ~Ei,j i<j & j<n & $r(i,k) & $r(j,k)":
def part1 "?msd_fib $prop(k,n) & Aj (j<k) => ~$prop(j,n)":
eval test1 "?msd_fib Ak,n (n>=1 & k>=1 & $part1(k,n)) => $r(n,k)":
def occurs "?msd_fib Ei i<n & $r(i,z)":
def mex "?msd_fib (~$occurs(n,z)) & At (t<z) => $occurs(n,t)":
eval test2 "?msd_fib An (~Ek $part1(k,n)) => Es $mex(n,s) & $r(n,s)":
\end{verbatim}
An automaton for $r(n)$ is given in Figure~\ref{figr}. It takes
pairs $(n,x)$ as input and accepts iff $x = r(n)$.
\begin{figure}[htb]
\begin{center}
\includegraphics[width=5in]{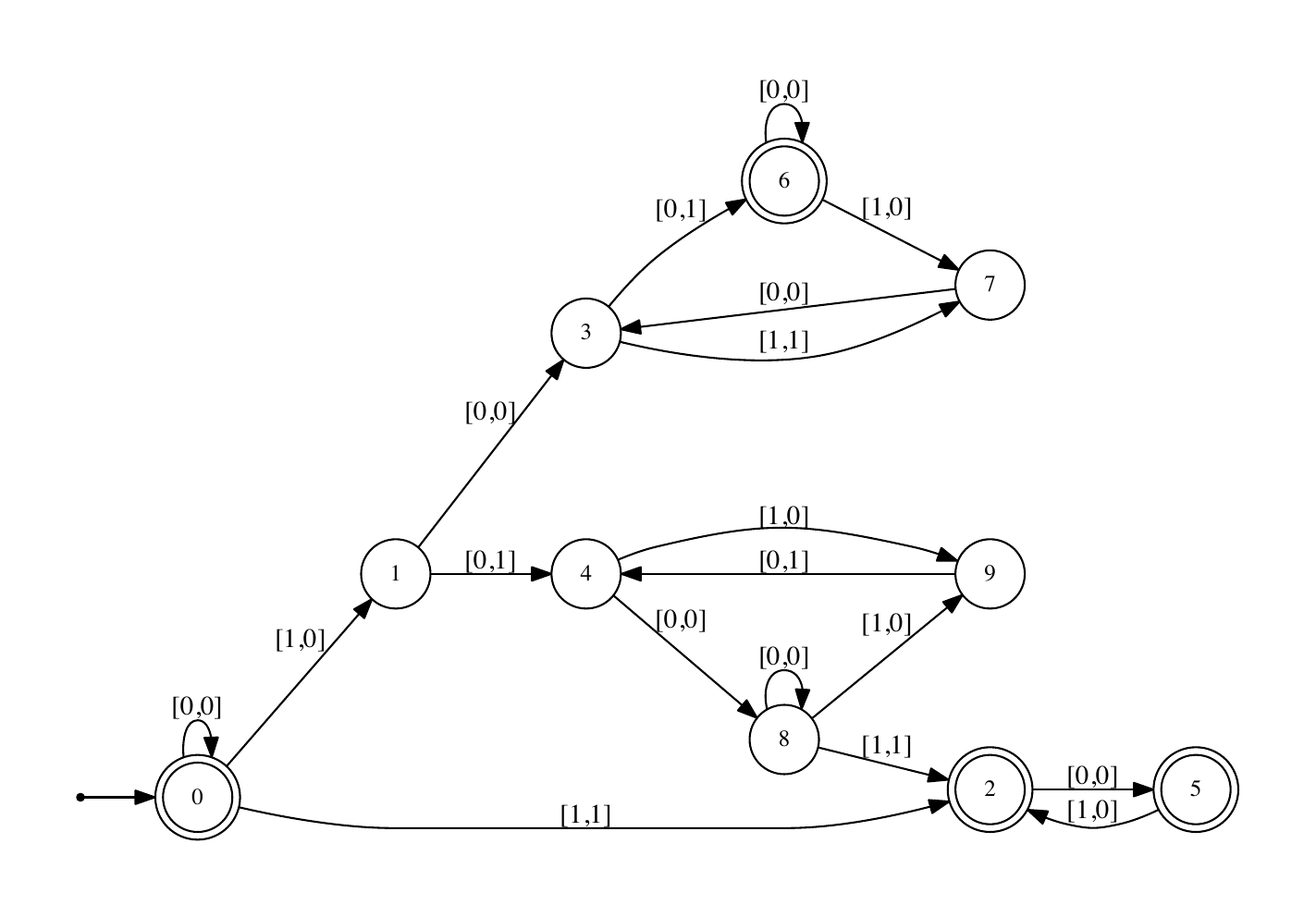}
\end{center}
\caption{Automaton that computes $r(n)$.}
\label{figr}
\end{figure}

We now establish a formula for $h(n)$, namely,
$h(n) =  2n+4-\lfloor (n+3)\varphi \rfloor$.
\begin{verbatim}
eval h_formula "?msd_fib An,z $h(z,n) <=> Ex $phin(n+3,x) & z+x=2*n+4":
\end{verbatim}

Finally, we get a formula for $h'(n)$, namely,
the one given in \seqnum{A319433}:  $h'(n) = \lfloor (n+2)/\varphi \rfloor - 1$.
We can verify this as follows:
\begin{verbatim}
eval hp_formula "?msd_fib An,z $hp(z,n) <=> Ex $noverphi(n+2,x) & z+1=x":
\end{verbatim}

\section{A final word}

No doubt the Hurt-Sada array contains many other interesting sequences that one
can study.  With the aid of the automaton {\tt m} and the {\tt Walnut} package,
one can easily explore its properties and discover new theorems.

For more about {\tt Walnut}, see the website\\
\centerline{\url{https://cs.uwaterloo.ca/~shallit/walnut.html} .}

\end{document}